\theoremstyle{plain}
\newtheorem{theorem}{Theorem}[section]
\theoremstyle{definition}
\newtheorem{remark}[theorem]{Remark}
\newcommand{\norm}[1]{\left\lVert#1\right\rVert}
\newcommand{\mathset}[2]{\left\{#1\middle\vert #2 \right\}}
\newcommand{\mathseq}[2]{\left(#1\right)_{#2}}
\newcommand{\card}[1]{{\# #1}}
\newcommand{\N}{\mathbb{N}}
\newcommand{\Z}{\mathbb{Z}}
\title{On the Pair Correlation Statistic of Sequences with Finite Gap Property}
\date{\today}
\author{Jasmin Fiedler and Christian Weiß}
\address{{\bf{Ruhr West University of Applied Sciences,}}\\ {{Department of Natural Sciences, Duisburger Str. 100,}}\\{{45479 M\"ulheim an der Ruhr, Germany}}}
\email{jasmin.fiedler@hs-ruhrwest.de, christian.weiss@hs-ruhrwest.de}
\begin{document}

\begin{abstract} The limiting function $f(s)$ of the pair correlation
\[ 
\frac{1}{N} \# \left\{ 1 \leq i\neq j\leq N \middle\vert  \left\lVert x_i - x_j \right\rVert \leq \frac{s}{N} \right\} 
\]
for a sequence $(x_N)_{N \in \mathbb{N}}$ on the torus $\mathbb{T}^1$ is said to be Poissonian if it exists and equals $2s$ for all $s \geq 0$. For instance, independent, uniformly distributed random variables generically have this property. Obviously $f(s)$ is always a monotonic function if existent. There are only few examples of sequences where $f(s) \neq 2s$, but where the limit can still be explicitly calculated. Therefore, it is an open question which types of functions $f(s)$ can or cannot appear here. In this note, we give a partial answer on this question by addressing the case that the number of different gap lengths in the sequence is finite and showing that $f$ cannot be continuous then.
\end{abstract}

\maketitle

\section{Introduction}\label{sec:intro}

For a sequence $\mathseq{x_N}{N\in\N}$ on the torus $\mathbb{T}^1 = [0,1] / \sim$, a natural object to study is the behavior of gaps on a local scale, i.e., 
\[
\frac{1}{N} R(s,N):= \frac{1}{N} \card{\mathset{1 \leq i\neq j\leq N}{\norm{x_i-x_j}\leq\frac{s}{N}}}
\]
for $s \geq 0$, where $\norm{x}:=\min\mathset{|x-z|}{z\in\Z}$ represents the distance to the nearest integer. If $\mathseq{X_N}{N\in\N}$ is a sequence of independent and uniformly distributed random variables it can easily be shown that
$$\lim_{N \to \infty} \frac{1}{N} R(s,N) = 2s$$
holds almost surely for all $s \geq 0$, see e.g. \cite{Mar07}. Whenever a deterministic sequence $\mathseq{x_N}{N\in\N}$ has this property, it has \textit{Poissonian pair correlations}. The first explicit example of such a sequence was found in \cite{BMV15} and is given as $(x_N) = \{ \sqrt{N} \}$ for $N$ not a perfect square, where $\{ \cdot \}$ denotes the fractional part of a number. Other more recent examples are due to \cite{LST21} and \cite{LT22}. The number of examples is so limited because calculating the pair correlation is usually a hard task. On the contrary, many important classes of uniformly distributed sequences, like van der Corput and Kronecker sequences, can even be excluded to have Poissonian pair correlations, see e.g. \cite{larcher:som_neg_results_poiss_pair_corr}, \cite{PS19}.\\[12pt]
One general obstacle for possessing Poissonian pair correlations lies in the gap structure of the sequence. Recall that for a finite sequence $(x_n)_{n=1}^N \in \mathbb{T}^1$ the gap lengths are the distances between two neighboring elements. Throughout this note, the different gap lengths are denoted by $d^1,\ldots,d^k$ with $k \leq N$. For instance, Kronecker sequences $\{ n \alpha\}$ with $\alpha \in \mathbb{R}$ only have at most three different gap lengths according to the famous Three Gap Theorem, see \cite{Sos58}. The same property holds true for an der Corput sequences.
\begin{theorem}{(\cite{larcher:som_neg_results_poiss_pair_corr}, Theorem 1)} \label{thm:LS} Let $\mathseq{x_N}{N\in\N}$ be a sequence in $\mathbb{T}^1$ with the following property: There is an $s \in \mathbb{N}$, positive real numbers $K,\gamma \in \mathbb{R}$, and infinitely many $N \in \mathbb{N}$ such that the point set $x_1,\ldots,x_N$ has a subset with $M \geq \gamma N$ elements, denoted by $x_{j_1},\ldots,x_{j_M}$, which are contained in a set of points with cardinality at most $KN$ having at most $s$ gap lengths. Then $\mathseq{x_N}{N\in\N}$ does not have Poissonian pair correlations.
\end{theorem}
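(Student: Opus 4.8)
The plan is to prove this by contradiction: assume the sequence has Poissonian pair correlations and show that the concentration of $M \geq \gamma N$ points inside a small structured set forces the pair correlation count $R(s,N)$ to grow faster than linearly, violating the Poissonian property. The intuition is that if many points live in a set with only $s$ gap lengths and at most $KN$ total points, then those $M$ points are forced to cluster: having few distinct gap lengths means the structured set looks locally like a union of arithmetic-progression-like pieces, and packing $\gamma N$ of our points into such a rigid skeleton creates many pairs at small mutual distance.

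Concretely, I would first fix the relevant scale. Poissonian pair correlations means $\frac{1}{N}R(s,N) \to 2s$ for every $s$, so in particular $R(s,N) = O(N)$ for each fixed $s$. The strategy is to exhibit, for the infinitely many $N$ supplied by the hypothesis, a constant $c > 0$ and a fixed $s_0$ such that $R(s_0, N) \geq c N \log N$ or at least $R(s_0,N) \geq C N$ with $C$ exceeding $2 s_0$ by a margin that can be made arbitrarily large, contradicting convergence to $2 s_0$. The key combinatorial step is the following counting estimate. Consider the ambient structured point set $P$ with $\card{P} \le KN$ and at most $s$ gap lengths $d^1,\dots,d^s$. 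Order the points of $P$ cyclically; consecutive gaps take values among the $d^i$. Because there are at most $s$ gap values, by pigeonhole many pairs of points of $P$ are separated by distances that are short integer combinations of the $d^i$. I would quantify how many pairs of points in $P$ lie within distance $\tfrac{s_0}{N}$ of each other, using that the total length is bounded (the torus has length $1$) so the average gap is at least $\tfrac{1}{KN}$, and then count nearby pairs along each ``direction'' generated by a gap length.

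The crucial point is transferring this from the ambient set $P$ to our subset of size $M \ge \gamma N$. Since the $x_{j_1},\dots,x_{j_M}$ are a $\gamma$-positive fraction of $P$, I would argue that the number of close pairs among them is a definite fraction of the close pairs in $P$: either by a direct averaging argument (if a positive proportion of points sit inside a set with many short pairwise distances, a positive proportion of those short pairs survive), or by a convexity/Cauchy–Schwarz bound on the number of points falling into each of the $O(N)$ short intervals of length $\tfrac{s_0}{N}$ that tile the torus. If $M \ge \gamma N$ points are distributed among $N$ such intervals, then $\sum_\ell \binom{m_\ell}{2}$ is minimized when the points spread evenly, giving $\sum_\ell \binom{m_\ell}{2} \gtrsim \gamma^2 N$ pairs within a bounded window; choosing $s_0$ large relative to $1/\gamma^2$ makes this exceed $2 s_0 N$, the contradiction.

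The main obstacle I anticipate is making the finite-gap structure do real work rather than merely invoking positive density. Positive density alone (via the interval-counting argument) already yields many close pairs, so the finiteness of gap lengths and the bound $\card{P} \le KN$ must be what pins the relevant scale: they guarantee that $P$ itself is contained in $O(N)$ intervals of width $\tfrac{s_0}{N}$ with controlled overlap, so that the $M$ points genuinely concentrate at scale $\tfrac{1}{N}$ rather than being smeared over the whole torus. I would therefore spend the most care on the lemma that a set with $\le KN$ points and $\le s$ gap lengths has at least $\Omega(N)$ pairs within distance $\tfrac{s_0}{N}$ for suitable $s_0 = s_0(K,s)$, and on verifying that passing to the $\gamma$-dense subset retains a positive proportion of these pairs uniformly in $N$.
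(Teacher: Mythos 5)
First, a remark on what you are being compared against: the paper does not reprove this theorem (it is quoted from Larcher--Stockinger), but the proof of Theorem~\ref{thm:main} in Section~2 contains the machinery any correct proof must use, namely the classification of the finitely many gap lengths into \emph{large} ($Nd_j^N\to\infty$), \emph{medium} ($Nd_j^N$ bounded and positive) and \emph{zero} gaps, with a separate contradiction in each case. Measured against that, your proposal has a genuine gap at its quantitative core, and the overall strategy --- beating the Poissonian count $2s_0N$ at one fixed scale $s_0$ --- cannot work in general. A concrete obstruction: let $P$ consist of $KN$ equally spaced points (a single gap length $1/(KN)$) and let the distinguished subset be every $\lceil K/\gamma\rceil$-th point of $P$, so $M\approx\gamma N$ points with spacing $\approx 1/(\gamma N)$. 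For any fixed $s_0$, each subset point has about $2s_0\gamma$ subset neighbours within distance $s_0/N$, so the number of close pairs is about $2s_0\gamma^2N\leq 2s_0N$: the pair count never exceeds the Poissonian benchmark at any fixed scale, and no choice of $s_0$ yields your contradiction. What actually fails here is detected differently: all $M$ consecutive subset gaps equal $1/(\gamma N)$, so for $u<1/\gamma\leq t$ with $t-u$ arbitrarily small one has $R(t,N)-R(u,N)\geq 2M\geq 2\gamma N$, while Poissonian behaviour would force $\frac{1}{N}\left(R(t,N)-R(u,N)\right)\to 2(t-u)<2\gamma$. This is precisely the ``medium gap'' step in the paper's proof (the pigeonhole on the bounded interval $[0,U_j]$ producing $u<t$ with small increment of $f$), and it, together with the zero-gap blow-up you do gesture at, is what a proof must contain.

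Two further concrete problems. Your convexity estimate is false as stated: distributing $M=\gamma N$ points over $N$ intervals minimizes $\sum_\ell\binom{m_\ell}{2}$ at $0$ when $\gamma\leq 1$ (at most one point per interval), not at order $\gamma^2N$; and even granting $\gamma^2N$ close pairs, enlarging $s_0$ raises the target $2s_0N$ rather than helping you exceed it, so the inequality you want goes the wrong way. Second, the passage from the ambient set $P$ to the subset is more delicate than ``a positive proportion of close pairs survive'' (the example above shows the subset can be far sparser at scale $1/N$ than $P$ is). The standard reduction is structural, not proportional: since $\card{P}\leq KN$ and $M\geq\gamma N$, a positive proportion of consecutive distances \emph{within the subset} are sums of at most $O(K/\gamma)$ gaps of $P$ and hence take only boundedly many distinct values; one then runs the large/medium/zero trichotomy on those finitely many values. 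Without that reduction and the medium-gap increment argument, the proof does not close.
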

Further connections between the gap properties of sequences and the pair correlation statistic have been established e.g. in \cite{AISTLEITNER2021112555} and \cite{weiss:some_conn_discr_fin_gap_pair_corr}. It is now natural to ask which other types of limits may occur for the pair correlation statistic, i.e., given a function $f: \mathbb{R}^+_0 \to \mathbb{R}^+_0$, does there exist a sequence $\mathseq{x_N}{N\in\N} \in \mathbb{T}^1$ such that
\begin{align} \label{eq:fppc}
\lim_{N \to \infty} \frac{1}{N} R(N,s) = f(s).
\end{align}
for all $s \geq 0$? We say that any sequence $\mathseq{x_N}{N\in\N} \subset [0,1]$ which satisfies \eqref{eq:fppc} has \textit{$f$-pair correlations}. Also in the non-Poissonian case, there is only a limited number of known examples, i.e. functions $f$, such that there exists a sequence with $f$-pair correlations, see \cite{Lut20} \cite{MS13}, \cite{Say23}, \cite{Wei23}, \cite{Wei24}. It is trivial that $f$ must be non-decreasing. Moreover $f(0) = 0$ is automatic if the number of multiply appearing elements is of order $o(\sqrt{N})$. This happens for example if the elements of $\mathseq{x_N}{N\in\N}$ are distinct. To the best of the authors' knowledge there is nothing else known about any property which a general $f$ necessarily has or does not have. In this note, we address the case, where $f$ only has a finite number of different gap lengths and show the following:
%\begin{theorem} \label{thm:main}
%Let \cw{$f:\mathbb{R}^+\to \mathbb{R}^+$} be a continuous and non-decreasing function. Let $x=\mathseq{x_N}{N\in\N}$ be a sequence in $[0,1)$. If there is some $k\in\N$, some $\gamma>0$ and infinitely many $N\in\N$ such that $x_1,\ldots,x_N$ has a subset of size $M\geq \gamma N$ which has at most $k$ different distinct gap lengths, then $x$ does not have $f$-pair correlations. 
%\end{theorem}
\begin{theorem} \label{thm:main}
Let $f:\mathbb{R}^+_0\to \mathbb{R}^+_0$ be a continuous and non-decreasing function. Let $x=\mathseq{x_N}{N\in\N}$ be a sequence in $[0,1)$. If there is some $k\in\N$ and infinitely many $N\in\N$ such that $x_1,\ldots,x_N$ has at most $k$ different distinct gap lengths, then $x$ does not have $f$-pair correlations. 
\end{theorem}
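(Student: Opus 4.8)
The plan is to argue by contradiction: assume $x$ has $f$-pair correlations with $f$ continuous and non-decreasing, and exhibit a genuine jump of $f$. The guiding principle is that with at most $k$ gap lengths a positive proportion of the $N$ points sit at mutually \emph{equal} distances, so that the step function $s\mapsto \tfrac1N R(s,N)$ acquires a jump whose height is bounded below independently of $N$, and this jump must survive in the limit $f$. Concretely, I would first fix the infinite set of indices $N$ for which $x_1,\dots,x_N$ has at most $k$ distinct gap lengths, and sort the $N$ points on $\mathbb{T}^1$ (with multiplicity) to obtain $N$ cyclic gaps $g_0,\dots,g_{N-1}$ with $\sum_j g_j=1$; the normalized gaps $\tilde g_j:=N g_j$ then have average $1$ and take at most $k$ distinct values. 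The first task is to locate a \emph{positive} gap length $\hat d_N$ that is simultaneously bounded, say $\hat d_N\le C$ with $C$ independent of $N$, and carries linear multiplicity $\hat m_N\ge cN$ for some $c>0$.

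To do this I would let $P_N$ denote the number of positive gaps, equivalently the number of distinct point locations, and pass to a subsequence on which $P_N/N\to\rho\in[0,1]$. If $\rho=0$, there are only $o(N)$ distinct locations, and writing $\mu_v$ for the multiplicity of location $v$, Cauchy--Schwarz gives $R(0,N)=\sum_v\mu_v^2-N\ge N^2/P_N-N$, so $\tfrac1N R(0,N)\ge N/P_N-1\to\infty$; hence the limit $f(0)$ cannot exist finitely and $x$ has no $f$-pair correlations. If $\rho>0$, then for large $N$ the $P_N\ge \rho N/2$ positive gaps have normalized mean $N/P_N\le 2/\rho$, so by a Markov-type averaging at least half of them satisfy $\tilde g_j\le 4/\rho=:C$; as these lie in at most $k$ values, one positive length $\hat d_N\in(0,C]$ occurs with multiplicity $\hat m_N\ge P_N/(2k)\ge \rho N/(4k)=:cN$. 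Passing to a further subsequence I may assume $\hat d_N\to s_0\in[0,C]$.

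The final step converts this into a discontinuity of $f$. The $\hat m_N$ cyclic gaps of length $\hat d_N$ join consecutive sorted points at nearest-integer distance exactly $\hat d_N/N$ (note $\hat d_N/N\le C/N\le \tfrac12$ for large $N$), contributing at least $2\hat m_N$ ordered pairs that $R(s,N)$ counts precisely once $s\ge \hat d_N$. Thus for any $0\le a<b$ with $a<\hat d_N\le b$ one has $\tfrac1N R(b,N)-\tfrac1N R(a,N)\ge 2\hat m_N/N\ge 2c$. If $s_0>0$, choosing $a=s_0-\varepsilon$, $b=s_0+\varepsilon$ and letting $N\to\infty$ and then $\varepsilon\to0$ yields $f(s_0^+)-f(s_0^-)\ge 2c>0$; if $s_0=0$, choosing $a=0$ and any $b>0$ yields $f(b)-f(0)\ge 2c$, hence $f(0^+)-f(0)\ge 2c>0$. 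In either case $f$ is not continuous, contradicting the hypothesis.

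I expect the main obstacle to be precisely the bookkeeping that secures a \emph{positive} bounded gap length of linear multiplicity: one must discard the degenerate regime in which coincidences dominate (handled above by the blow-up of $R(0,N)$), and one must accommodate the boundary case $s_0=0$, where the discontinuity of $f$ manifests as a failure of right-continuity at the origin rather than as an interior jump. Tracking multiplicities carefully in the presence of repeated points, so that the $2\hat m_N$ newly counted ordered pairs are genuinely distinct index pairs, is the remaining technical point.
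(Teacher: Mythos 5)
Your argument is correct, and its two engines coincide with the paper's: the pair-counting inequality $R(b,N)\ge R(a,N)+2\hat m_N$ for a gap length of linear multiplicity squeezed between $a/N$ and $b/N$, and the Cauchy--Schwarz blow-up $\tfrac1N R(0,N)\ge N/P_N-1$ when coincidences dominate (the paper writes the latter as $\tfrac1N\sum_i |B_i|^2-1\ge N/l^N-1$ over batches of equal points). The packaging, however, is genuinely different. The paper classifies each of the $k$ gap-length sequences as large ($Nd_j^N\to\infty$), zero, or medium (positive with $Nd_j^N$ bounded), and shows separately that large gaps carry at most $1/d_j^N=o(N)$ points and that medium gaps carry $o(N)$ points, so that zero gaps must dominate and the blow-up finishes the proof. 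You instead split on whether the number $P_N$ of distinct locations is $o(N)$, and when it is not, your Markov-type averaging (positive normalized gaps have mean $N/P_N\le 2/\rho$) combined with pigeonhole over the at most $k$ values extracts in one stroke a \emph{bounded, positive} normalized gap length of linear multiplicity; this absorbs the paper's large-gap case without treating it separately. Likewise, where the paper uses uniform continuity of $f$ on $[0,U_j]$ and pigeonholes $Nd_j^N$ into a subinterval on which $f$ increases by less than $\delta$, you extract a convergent subsequence $\hat d_N\to s_0$ by compactness and contradict continuity of $f$ at the single point $s_0$ (or right-continuity at $0$); this is slightly cleaner and localizes the putative jump, though note that the paper's pigeonhole version is what makes its Remark (allowing $f$ with finitely many discontinuities) immediate, whereas your version would need an extra perturbation of $a,b$ if $s_0$ happened to be one of the bad points. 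The technical point you flag at the end is genuine but harmless: distinct cyclic gaps correspond to distinct consecutive index pairs in the sorted order, so the $2\hat m_N$ ordered pairs are indeed distinct, exactly as in the paper's step $R(t,N)\ge R(u,N)+2l_j^N$.
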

\begin{remark} The careful reader will realize, that our proof of Theorem~\ref{thm:main} is also valid if $f$ is left-continuous on $\mathbb{R}^+$, continuous in $0$ and has only a finite number of discontinuities. 
\end{remark}
%\sout{In other words: the limiting pair correlation statistic of a disjoint sequence $\mathseq{x_N}{N\in\N}$ with the finite gap property cannot be continuous, if it exists. }\todo{Ich glaube den Teil würde ich einfach weglassen. JF}
Note that Theorem~\ref{thm:main} leaves two possibilities: either the limit does not exist or it is not continuous. For example, it is easy to see for a Kronecker sequence $\{n \alpha\}$ with $\alpha \in \mathbb{R} \setminus \mathbb{Q}$ that the limit cannot exist. The same holds true for van der Corput sequences, compare e.g. \cite{Wei24}.\\[12pt]
%Theorem~\ref{thm:main} follows immediately from the following more general result which we will show.
%\begin{theorem}
%    Let $\mathseq{x_N}{N\in\N}$ be a sequence in $[0,1)$ which has $f$ PPC with continuous $f$. If there is some $k\in\N$, some $\gamma>0$ and infinitely many $N\in\N$ such that $x_1,\ldots,x_N$ has a subset of size $M\geq \gamma N$ which has at most $k$ different distinct gap lengths, then  $f(s) \geq 1$ holds for all $s\geq 0$. 
%\end{theorem} 
The strategy of our proof resembles the simplified proof of Theorem~\ref{thm:LS} in \cite{FW23} with a few additional arguments to account for the stronger claim. The arguments therein rely on a classification of the gap lengths depending on how fast they approach zero as $N$ grows. Therefore, the proof crucially depends on the finite gap property.\\[12pt]
Finally, it is already clear from the Poissonian case that sequences with $f$-pair correlations for continuous $f$ exist. However, we would like to stress that there are also other known examples of continuous $f$-pair correlations, see e.g. \cite{Wei23}, so the finite gap property is indeed necessary in Theorem~\ref{thm:main}, also when we only consider the non-Poissonian situation.

\paragraph{Acknowledgments.} The authors thank Michael Gnewuch for his valuable comments on an earlier draft of this paper and pointing out some inaccuracies to us.

\section{Proof of the main result}

In this section we prove Theorem~\ref{thm:main}.

\begin{proof}[Proof of Theorem~\ref{thm:main}]
The sequence $x=\mathseq{x_N}{N\in\N}$ contains a subsequence such that it has exactly $k$ distinct distances between neighboring elements (pigeonhole principle). We will without loss of generality assume that $x$ is that sequence. Let us call these distances $d_1^{N},\ldots,d_k^{N}$ and order them such that $d_1^N<d_2^n<\ldots<d_k^N$ for all $n\in\N$. \\[12pt]
Without loss of generality we may also assume that the set $x_{1},\ldots,x_{N}$ is ordered, meaning that gaps are always between consecutive elements.\\[12pt]
By moving to subsequences if necessary, every gap length $\mathseq{d_j^{N}}{N\in\N}$ with $1\leq j\leq k$ falls into one of three categories (compare \cite{larcher:som_neg_results_poiss_pair_corr} and \cite{FW23}):
\begin{itemize}
    \item $\lim_{N\to\infty}Nd_j^{N}=\infty$, which we call large gaps.
    \item $d_j^{N}=0$ for all $i\in\N$. We call these gaps zero gaps.
    \item There does not exist a subsequence of large gaps and no $d_j^{N}$ equals zero. In this case, we speak of medium gaps. Note that this implies that there are constants $0<U_j<\infty$ such that $0< {N} d_j^{N}\leq U_j$ for all ${N}\in\N$.
\end{itemize}
From now on, we assume that the sequence has $f$-pair correlations, aiming for a contradiction. For every $1\leq j\leq k$, let 
\[
l_{j}^{\N}=\card{\mathset{i\leq {N} \, }{ \, \Vert x_{i}-x_{i+1}\Vert=d_{j}^{N}}}
\]
where we define $x_{{N}+1}:=x_1$. By definition
\begin{equation}\label{eq:sum_vec_n_eq_N_i}
\sum_{j=1}^kl_{j}^{N}={N}
\end{equation}
is true for all ${N}\in\N$. We will now show that for every $j$ that is not a zero gap $\lim_{{N}\to\infty}\frac{l_{j}^{N}}{{N}}=0$ holds, i.e. those gaps contribute to the sum in \eqref{eq:sum_vec_n_eq_N_i} in a negligible way. Afterwards we will use this and \eqref{eq:sum_vec_n_eq_N_i} to arrive at a contradiction in the case when $d_j^{N}$ represents  zero gaps.\\[12pt]
First, we take a look at the case, where the sequence contains s a large gap of size $d_j^{N}$. Interestingly, for the $f$-pair correlation property this case is irrelevant. Notice that 
\[
l_{j}^{N}\leq\frac{1}{d_j^{N}}.
\]
It follows that
\[
    0 \leq \limsup_{{N}\to\infty}\frac{l_{j}^{N}}{{N}}\leq \limsup_{{N}\to\infty}\frac{1}{d_j^{N}{N}}=0.
\]
Now, let us take a look at medium gaps.  We will once again show that $\lim_{{N}\to\infty}\frac{l_{j}^{N}}{{N}}=0$.
Let $1\leq j\leq k$ correspond to a medium gap and assume to the contrary that there is some $\delta>0$ such that $ l_{j}^{N}\geq \delta {N}$ for infinitely many ${N} \in \mathbb{N}$. 
 Since the gaps are medium, there is a constant $0<U_j<\infty$ such that $0< d_{j}^{N} {N}\leq U_j$ for all ${N}\in\N$. Since $f$ is continuous, the interval $[0,U_j]$ can be split into a finite number of subintervals such that over the course of each interval $f$ increases by less than $\delta$. By the pigeonhole principle one of these intervals must contain an infinite number of $d_j^{N} N$. This means that there exist $0\leq u<t\leq U_j$ with $f(t)-f(u)<\delta$ and 
\begin{equation}\label{eq:u_low_d_leq_t}
 \frac{u}{{N}}< {d}_{j}^{N}\leq\frac{t}{{N}}   
\end{equation}
for infinitely many ${N}\in\N$. After moving to a subsequence, if necessary, this holds true for all ${N}$.  \\[12pt]
For every $i\leq {N}$ with $\Vert x_{i}-x_{i+1}\Vert=d_{j}^{N}$ inequality \eqref{eq:u_low_d_leq_t} implies $$\frac{u}{{N}} < \Vert x_{i}-x_{i+1}\Vert\leq \frac{t}{{N}}.$$ 
Thus,
\begin{align*}
    R(t,{N})
    &\geq R(u,{N})+2l_{j}^{N}.
\end{align*}
As the sequence has $f$-pair correlations by assumption, this implies 
\begin{align*}
    f(t)-f(u)
    &=\lim_{{N}\to\infty} \frac{1}{{N}}R\left(t, {N}\right)-\frac{1}{{N}}R(u, {N})\\
    &\geq\limsup_{{N}\to\infty}\frac{2l_{j}^N}{N}\geq 2\delta.
\end{align*}
But we chose our points in such a way that $
f(t)-f(u)<\delta$, which is a contradiction. Therefore,  $\lim_{i\to\infty}\frac{l_j^{N}}{{N}}=0$.\\[12pt]
Finally, we consider zero gaps. If there were none, then 
\[
\lim_{i\to\infty}\frac{\sum_{j=1}^k l_j^{N}}{{N}}=0,
\]
by the cases of medium and large gaps which we just considered. This contradicts \eqref{eq:sum_vec_n_eq_N_i}. Hence, there must be zero gaps. Since we ordered the gap lengths $d_1^{N}<d_2^{N}\ldots<d_k^{N}$ by size, only $d_1^{N}$ represents a zero gap and again by \eqref{eq:sum_vec_n_eq_N_i} we conclude that $\lim_{{N}\to\infty}\frac{ l_1^{N}}{{N}}=1$. \\
Setting $l^{N}:=\sum_{j=2}^kl_j^{N}$ it follows that $l^{N}=o_{N}({N})$. The sequences consists of $l^{N}$ batches of points which have distance $0$ each. These batches are inductively given by $B_1:=\mathset{i\leq {N}}{x_{i}=x_1}$, $B_{i}:=\mathset{i_0\leq {N}}{x_{i_0}=x_{\nu_l}}$ with $\nu_l=\max B_{i-1}+1$ for $i=2, \ldots, l^{N}$. By construction the batches contain all points, i.e. $\sum_{{i}=1}^{l^{N}}\card{B_{i}}={N}$. Hence, we obtain
\begin{align*} 
  \frac{1}{{N}}R(s,{N})&\geq\frac{1}{{N}}\sum_{{i}=1}^{l^{N}}\card{\mathset{i_0\neq j_0\in B_{i}}{\norm{X_{i_0}-X_{j_0}}\leq \frac{s}{{N}}}}\\
  &=\frac{1}{{N}}\sum_{{i}=1}^{l^{N}}\card{B_{i}}(\card{B_{i}}-1)\\
  &=\left(\frac{1}{{N}}\sum_{{i}=1}^{l^{N}}\card{B_{i}}^2\right)-1
  \end{align*}
  The sum on the right hand side is minimal if all the batches $B_{i}$ have the same cardinality leading to
\begin{align*} 
  \frac{1}{{N}}R(s,{N})&\geq\frac{1}{{N}}\sum_{{i}i=1}^{l^{N}}\card{B_{i}}^2-1\\
  &\geq \frac{1}{{N}}\sum_{{i}=1}^{l^{N}}\left(\frac{{N}}{l^{N}}\right)^2-1\\
  &=\frac{{N}}{l^{N}}-1\stackrel{{N}\to\infty}{\longrightarrow}\infty
  \end{align*}
since $l^{N}=o({N})$. This contradicts the assumption that $\mathseq{x_N}{N\in\N}$ has $f$-pair correlations since the limit of $\frac{1}{{N}}R(s,{N})$ cannot even be finite and thus finishes the proof.

\end{proof}

\bibliographystyle{alpha}
\bibdata{references}
\bibliography{references}
\end{document}